\documentclass[12pt]{article}

\usepackage[margin=1in]{geometry}
\usepackage{amsmath, amsthm, amssymb}
\usepackage{microtype}
\usepackage{hyperref}

\newtheorem{theorem}{Theorem}
\newtheorem{lemma}[theorem]{Lemma}
\newtheorem{proposition}[theorem]{Proposition}
\newtheorem{corollary}[theorem]{Corollary}
\theoremstyle{definition}

\newtheorem{example}[theorem]{Example}
\theoremstyle{remark}

\title{A Simple Trigonometric Classification of Quartic Roots}

\author{Sawon Pratiher\footnote{Indian Institute of Technology (IIT) Kharagpur, West Bengal 721302, India. This work is the outcome of the culmination of research started since the author was first introduced to the algebra coursework in school. (E-mail: \texttt{sawon1234@gmail.com})}}

\date{March 17, 2022}

\begin{document}

\maketitle

\begin{abstract}
This article provides a simple trigonometric method for determining how many roots of a quartic equation are real and how many are complex, without solving the equation. The approach replaces the quartic's classical discriminant---a degree-six polynomial in the coefficients---with an elementary analysis of the function $f(\theta) = a\cos\theta + \cos 4\theta + b$ on $[0,\pi]$, obtained by matching the quartic to the Chebyshev identity $8\cos^4\!\theta - 8\cos^2\!\theta + 1 = \cos 4\theta$. The derivation is computationally light and conceptually natural, and has the potential to demystify the geometry of a quartic equation's roots from a trigonometric perspective.
\end{abstract}

\section{Introduction}

Determining whether a polynomial equation has all real roots, or some complex ones, is a question as old as algebra itself. For the quadratic $x^2 + bx + c = 0$, the answer is immediate: compute $b^2 - 4c$ and check its sign. For the cubic, the discriminant is a bit more involved but still manageable. For the quartic, however, the discriminant is a polynomial of degree six in the coefficients~\cite{Rees1922}, and its evaluation is a formidable exercise even for those who have seen it before.

This article introduces a simple alternative. We show that a single trigonometric substitution---$t = u\cos\theta$ for a well-chosen constant $u$---transforms the quartic into a function of the form $a\cos\theta + \cos 4\theta + b$, and that counting the zeros of this function on $[0,\pi]$ immediately tells us how many roots of the quartic are real. The derivation uses nothing beyond the Chebyshev identity for $\cos 4\theta$, the intermediate value theorem, and the convexity of the quartic outside a bounded interval. Every step is a routine computation that a first-time Algebra learner could follow.

The reader may wonder: why should a trigonometric function know anything about the roots of a polynomial? The connection is the identity
\begin{equation}\label{eq:cheb4}
8\cos^4\theta - 8\cos^2\theta + 1 = \cos 4\theta,
\end{equation}
which is nothing other than the statement that the degree-four Chebyshev polynomial $T_4(x) = 8x^4 - 8x^2 + 1$ satisfies $T_4(\cos\theta) = \cos 4\theta$. If we substitute $t = u\cos\theta$ into a quartic with the right structure, the leading term $t^4$ and the quadratic term $t^2$ can be made to match the left side of (\ref{eq:cheb4}) exactly, leaving only $\cos\theta$ and a constant as residual terms. The bounded range of $\cos\theta$, together with the oscillatory nature of $\cos 4\theta$, then makes it possible to read off the root structure from an entirely elementary analysis.

Throughout this article, all polynomials have real coefficients, so complex roots come in conjugate pairs. A quartic with real coefficients therefore has zero, two, or four real roots.

\section{Derivation}

This section is intentionally written at a leisurely pace, to emphasize how straightforward all the algebraic manipulations are.

\subsection{The depressed quartic}

Every monic quartic $z^4 + a_3 z^3 + a_2 z^2 + a_1 z + a_0 = 0$ can be converted to a \emph{depressed} quartic---one with no cubic term---by the substitution $t = z - a_3/4$. This is a standard change of variables (see, e.g., \cite{Irving2013}), and the result is an equation of the form
\begin{equation}\label{eq:depressed}
P(t) = t^4 + mt^2 + pt + q = 0.
\end{equation}
By Vieta's relations~\cite{Viete1646}, the four roots $t_1, t_2, t_3, t_4$ of the depressed quartic satisfy $t_1 + t_2 + t_3 + t_4 = 0$, so the centroid of the four roots sits at the origin. In particular, knowing any three roots determines the fourth via $t_4 = -(t_1 + t_2 + t_3)$.

\subsection{The trigonometric substitution}

Here is the key step. Assume $m < 0$, and set $u = \sqrt{-m}$. (The case $m \geq 0$ is discussed in Section~\ref{sec:mgeq0}.) Substituting $t = u\cos\theta$ into (\ref{eq:depressed}) and dividing through by $u^4/8 = m^2/8$ gives
\begin{equation}\label{eq:after_sub}
8\cos^4\theta - 8\cos^2\theta + \frac{8p}{(-m)^{3/2}}\cos\theta + \frac{8q}{m^2} = 0.
\end{equation}
Observe that the first two terms are exactly the left side of the Chebyshev identity (\ref{eq:cheb4}). Subtracting that identity from (\ref{eq:after_sub}), we arrive at the \textbf{reduced trigonometric equation}:
\begin{equation}\label{eq:f_def}
\boxed{f(\theta) \;=\; a\cos\theta + \cos 4\theta + b \;=\; 0,}
\end{equation}
where the \textbf{trigonometric parameters} are
\begin{equation}\label{eq:ab}
a = \frac{8p}{(-m)^{3/2}}, \qquad b = \frac{8q}{m^2} - 1.
\end{equation}

That is the entire reduction. One substitution, one division, and one use of the Chebyshev identity. The quartic has been converted into a trigonometric equation in~$\theta$ with just two parameters, $a$ and $b$.

\subsection{Why this works: the bijection}

Since $\cos\colon [0,\pi] \to [-1,1]$ is a continuous, strictly decreasing bijection, the map $\theta \mapsto u\cos\theta$ is a bijection from $[0,\pi]$ onto $[-u, u]$. A direct calculation confirms the identity
\begin{equation}\label{eq:fP}
f(\theta) = \frac{8\,P(u\cos\theta)}{u^4}
\end{equation}
for all $\theta$, so the zeros of $f$ on $[0,\pi]$ correspond exactly to the roots of $P$ in $[-u, u]$. In particular, evaluating at the endpoints:
\begin{equation}\label{eq:boundary}
f(0) = \frac{8\,P(u)}{u^4}, \qquad f(\pi) = \frac{8\,P(-u)}{u^4}.
\end{equation}
The sign of $f(0)$ tells us whether the quartic is positive or negative at $t = u$, and the sign of $f(\pi)$ tells us the same at $t = -u$. In terms of the parameters $a$ and $b$:
\begin{equation}\label{eq:boundary_vals}
f(0) = a + 1 + b, \qquad f(\pi) = -a + 1 + b.
\end{equation}

\subsection{What about roots outside $[-u,u]$?}

The substitution $t = u\cos\theta$ only captures roots in $[-u,u]$, since $\cos\theta$ is bounded between $-1$ and $+1$. Could the quartic have real roots outside this interval? It certainly can. But a simple convexity argument controls how many.

\begin{lemma}[{Convexity outside $[-u,u]$}]\label{lem:convex}
If $m < 0$, then $P$ is strictly convex on $(-\infty, -u)$ and on $(u, \infty)$.
\end{lemma}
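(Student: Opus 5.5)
The plan is to compute the second derivative of $P$ directly and show that it is strictly positive whenever $|t| > u$. Differentiating (\ref{eq:depressed}) twice gives
\begin{equation*}
P''(t) = 12t^2 + 2m,
\end{equation*}
since the linear term $pt$ and the constant $q$ drop out. The key observation is that $P''$ does not depend on $p$ or $q$ at all, so the sign analysis depends only on $m$.

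Next I substitute $m = -u^2$, which is valid because $m<0$ and $u=\sqrt{-m}>0$. This gives $P''(t) = 12t^2 - 2u^2 = 2(6t^2 - u^2)$. The zeros of $P''$ are at $t = \pm u/\sqrt{6}$, and $P''>0$ precisely when $|t| > u/\sqrt{6}$. For $|t| > u$ we have $t^2 > u^2$, and therefore
\begin{equation*}
6t^2 - u^2 > 6u^2 - u^2 = 5u^2 > 0.
\end{equation*}
So $P''(t) > 0$ on $(-\infty,-u)$ and on $(u,\infty)$. In fact the argument gives slightly more: $P$ is strictly convex on the larger sets $(-\infty,-u/\sqrt{6})$ and $(u/\sqrt{6},\infty)$.

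Finally, I invoke the standard fact that a twice-differentiable function with $P''>0$ on an open interval is strictly convex there. This holds because $P'$ is then strictly increasing, so the mean value theorem yields the strict chord inequality.

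There is no real obstacle here. The only point needing care is the strictness of the convexity, and that comes from the strict inequality $t^2 > u^2$ on each open interval.
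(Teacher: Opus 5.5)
Your proof is correct and follows essentially the same route as the paper: compute $P''(t)=12t^2+2m$ and use $t^2>-m=u^2$ on $|t|>u$ to get $P''(t)>-10m>0$. Your bound $2(6t^2-u^2)>10u^2$ is exactly this. Your added justification that $P''>0$ on an open interval gives strict convexity, and your remark that the convexity extends to $|t|>u/\sqrt{6}$, are also correct.
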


\begin{proof}
The second derivative is $P''(t) = 12t^2 + 2m$. For $|t| > u = \sqrt{-m}$, we have $t^2 > -m$, so $P''(t) > 12(-m) + 2m = -10m > 0$.
\end{proof}

A strictly convex function can cross the horizontal axis at most twice, but since $P(t) \to +\infty$ as $t \to \pm\infty$ for a quartic with positive leading coefficient, it actually crosses at most \emph{once} on each half-line. This yields:

\begin{corollary}\label{cor:ext}
The quartic $P$ has at most one root in $(u, \infty)$ and at most one root in $(-\infty, -u)$. Moreover:
\begin{itemize}
\item $P$ has exactly one root in $(u, \infty)$ if and only if $P(u) < 0$, i.e., $f(0) < 0$.
\item $P$ has exactly one root in $(-\infty, -u)$ if and only if $P(-u) < 0$, i.e., $f(\pi) < 0$.
\item $P$ has no roots outside $[-u,u]$ if and only if $f(0) \geq 0$ and $f(\pi) \geq 0$.
\end{itemize}
\end{corollary}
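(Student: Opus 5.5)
The plan is to work on each half-line separately, using Lemma~\ref{lem:convex}, the limit $P(t)\to+\infty$ as $t\to\pm\infty$, and the identities (\ref{eq:boundary}). By the symmetry $t\mapsto -t$, which sends $p\mapsto -p$ and swaps $f(0)$ with $f(\pi)$, it suffices to treat $(u,\infty)$. The third bullet is then just the conjunction of the negations of the first two, once "at most one root" is established.

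\textbf{Existence when $P(u)<0$.} If $P(u)<0$, the intermediate value theorem applied to $P(u)<0$ and $P(t)\to+\infty$ gives a root in $(u,\infty)$. For uniqueness, suppose $t_1<t_2$ are two roots in $(u,\infty)$. Strict convexity on $[u,t_2]$ makes the chord from $(u,P(u))$ to $(t_2,0)$ lie strictly above the graph on $(u,t_2)$. That chord is strictly negative there, so $P(t_1)<0$, a contradiction. Hence there is exactly one root.

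\textbf{The converse and "at most one" when $P(u)\ge 0$.} Here I would split on the sign of $P'(u)=4u^3+2mu+p=2u^3+p$.

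\emph{Case $P'(u)\ge 0$.} Strict convexity makes $P'$ strictly increasing on $(u,\infty)$, so $P'>0$ there. Then $P$ is strictly increasing, so $P(t)>P(u)\ge 0$ and there is no root.

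\emph{Case $P'(u)<0$, i.e.\ $p<-2u^3$.} This is the main obstacle. Convexity alone then permits $P$ to dip below zero after $t=u$ and come back up, which would give two roots in $(u,\infty)$ while $P(u)\ge 0$. My first attempt would be to use the extra structure $P(u)=pu+q\ge 0$, which forces $q\ge -pu>2u^4$, and try to bound $\min_{t>u}P$ from below.

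However, a trial computation suggests this cannot succeed in general. With $m=-1$ (so $u=1$), $p=-10$, $q=11$, we get $P(1)=1>0$, while near $t\approx1.48$ we have $P\approx-1.2<0$. That would give two roots in $(1,\infty)$ with $f(0)>0$.

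So I expect this step to fail as the corollary is literally worded. The argument above does prove the following, and I would write it up with this explicit caveat:
\begin{itemize}
\item the "if" directions of the first two bullets hold unconditionally;
\item "at most one root" and the "only if" directions hold under $P'(u)\ge 0$, and symmetrically under $P'(-u)\le 0$, i.e.\ $|p|\le 2u^3$.
\end{itemize}
The remaining case $|p|>2u^3$ needs separate treatment: locate the unique critical point of $P$ on the relevant half-line and check the sign of $P$ there.
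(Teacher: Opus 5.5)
Your analysis is correct: the statement is false as worded, and the step you flag is the step where the paper's proof fails. The paper argues that because $P$ is strictly convex on $(u,\infty)$ and $P(t)\to+\infty$, it ``crosses at most once'' and has ``no crossing if $P(u)\ge 0$.'' Convexity only gives \emph{at most two} zeros on a half-line. Reducing this to one requires $P$ to be nondecreasing at the endpoint, i.e.\ $P'(u)=2u^3+p\ge 0$, and the paper never checks this.

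Your example confirms the failure. For $P(t)=t^4-t^2-10t+11$ we have $u=1$, and $P(1)=1$, $P(1.5)=-1.1875$, $P(2)=3$. So there are two roots in $(1,\infty)$, one in $(1,1.5)$ and one in $(1.5,2)$. Yet $f(0)=8P(1)=8>0$ and $f(\pi)=8P(-1)=168>0$.

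Your positive results are also right. The IVT-plus-chord argument proves the ``if'' directions of the first two bullets unconditionally. The monotonicity argument proves ``at most one'' and the ``only if'' directions when $p\ge -2u^3$ on the right half-line and when $p\le 2u^3$ on the left. In the paper's parameters $a=8p/u^3$, so the corollary holds verbatim whenever $|a|\le 16$.

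For $a<-16$ (resp.\ $a>16$), you must do what you propose: evaluate $P$ at its unique critical point in $(u,\infty)$ (resp.\ $(-\infty,-u)$). There the root count can be $0$, $1$ or $2$ even when $f(0)\ge 0$ (resp.\ $f(\pi)\ge 0$). In particular, if $P(u)=0$ and $P'(u)<0$, there is exactly one root in $(u,\infty)$ although $P(u)\not<0$.

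The error also propagates through the rest of the paper. Your example has $a=-80$ and $b=87>|a|+1=81$, yet $P$ has two real roots. So it refutes Theorem~\ref{thm:four_complex}, including its sufficient condition, and the count $N_{\mathrm{real}}=N_{\mathrm{int}}+N_{\mathrm{ext}}$ on which the classification rests.
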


\begin{proof}
Since $P(t) \to +\infty$ as $t \to +\infty$ and $P$ is strictly convex and continuous on $(u, \infty)$, the intermediate value theorem guarantees exactly one crossing if $P(u) < 0$, and no crossing if $P(u) \geq 0$. The argument for $(-\infty, -u)$ is identical.
\end{proof}

So the problem neatly separates into two pieces: the \textbf{interior roots} (zeros of $f$ on $[0,\pi]$, corresponding to roots of $P$ in $[-u,u]$) and the \textbf{exterior roots} (at most one on each side, detected by the signs of $f(0)$ and $f(\pi)$).

\section{Classification of the roots}

We are now ready to state the main results. Define
\[
N_{\mathrm{int}} = \#\{\theta \in [0,\pi] : f(\theta) = 0\}, \qquad N_{\mathrm{ext}} = \mathbf{1}_{f(0)<0} + \mathbf{1}_{f(\pi)<0},
\]
where $\mathbf{1}$ denotes the indicator function. By Proposition~\ref{prop:bijection_formal} and Corollary~\ref{cor:ext}, the total number of real roots of $P$ is
\begin{equation}\label{eq:Nreal}
N_{\mathrm{real}} = N_{\mathrm{int}} + N_{\mathrm{ext}}.
\end{equation}

\begin{proposition}\label{prop:bijection_formal}
The zeros of $f$ in $[0,\pi]$ are in bijection with the roots of $P$ in $[-u,u]$. There is no double-counting at the boundary: if $f(0) = 0$ then $t = u$ is counted in $N_{\mathrm{int}}$, and $N_{\mathrm{ext}}$ contributes zero from the right (and similarly for $f(\pi) = 0$).
\end{proposition}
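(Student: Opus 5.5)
The plan is to combine the identity (\ref{eq:fP}) with the fact that $\theta\mapsto u\cos\theta$ is a bijection from $[0,\pi]$ onto $[-u,u]$. Both were established in the subsection on the bijection. First I will check (\ref{eq:fP}) directly. Substituting $t=u\cos\theta$ with $u^2=-m$ gives
\[
P(u\cos\theta)=u^4\cos^4\theta - u^4\cos^2\theta + pu\cos\theta + q .
\]
Multiplying by $8/u^4$ and applying the Chebyshev identity (\ref{eq:cheb4}) turns $8\cos^4\theta-8\cos^2\theta$ into $\cos4\theta-1$, which yields exactly $a\cos\theta+\cos4\theta+b$ with $a,b$ as in (\ref{eq:ab}). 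Since $u^4>0$, we get $f(\theta)=0$ if and only if $P(u\cos\theta)=0$.

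Next I define $\phi(\theta)=u\cos\theta$ and argue that it restricts to a bijection from $Z_f=\{\theta\in[0,\pi]: f(\theta)=0\}$ onto $Z_P=\{t\in[-u,u]: P(t)=0\}$. It is injective because $\cos$ is strictly decreasing on $[0,\pi]$ and $u>0$. For surjectivity, take $t\in Z_P$ and set $\theta=\arccos(t/u)\in[0,\pi]$. Then $\phi(\theta)=t$, and $f(\theta)=8P(t)/u^4=0$. The map sends $Z_f$ into $Z_P$ by the same identity. Hence $N_{\mathrm{int}}=|Z_P|$, so $N_{\mathrm{int}}$ counts the distinct real roots of $P$ lying in $[-u,u]$.

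For the no-double-counting claim, suppose $f(0)=0$. Then $\theta=0$ lies in $Z_f$ and corresponds to $t=u$, so this root is counted in $N_{\mathrm{int}}$. The indicator $\mathbf{1}_{f(0)<0}$ is zero, so the right-hand exterior count adds nothing. This agrees with Corollary~\ref{cor:ext}: when $P(u)=0\ge0$, the strictly convex function $P$ on the half-line $(u,\infty)$ has no root there. The case $f(\pi)=0$ at $t=-u$ is handled the same way, using the half-line $(-\infty,-u)$.

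The one point that needs care, and the main obstacle, is that the boundary points $t=\pm u$ belong to the closed interval $[-u,u]$ while the exterior intervals are open. I need the three sets $(-\infty,-u)$, $[-u,u]$, $(u,\infty)$ to partition $\mathbb{R}$, so that the sum (\ref{eq:Nreal}) counts each distinct real root exactly once.

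For the exterior count, I will argue directly rather than cite Corollary~\ref{cor:ext} alone, since a convex function that starts at zero need not stay positive. In the equality case $P(u)=0$, strict convexity together with $P'(u)=4u^3+2mu+p$ does not settle the matter by itself. Instead, note that on $(u,\infty)$ we have $P''>0$. If $P$ had a root $t_0>u$ as well as $P(u)=0$, then Rolle's theorem gives a critical point $c\in(u,t_0)$. Because $P''>0$ there, $c$ is a strict minimum of $P$ on $[u,\infty)$, so $P$ is increasing on $[c,\infty)$. That forces $P(t_0)>P(c)$, and $P(c)<0$ since the minimum lies strictly below the value $0$ attained at $u$ and $t_0$. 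This contradicts $P(t_0)=0$. So in the equality case there is no exterior root on that side, and counting the boundary root only in $N_{\mathrm{int}}$ is consistent.
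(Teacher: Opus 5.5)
Your first three paragraphs are correct and match what the paper relies on. The paper gives no separate proof; it uses the identity (\ref{eq:fP}), the strict monotonicity of $\cos$ on $[0,\pi]$, and the definition of $N_{\mathrm{ext}}$, under which $f(0)=0$ makes $\mathbf{1}_{f(0)<0}=0$ automatic. That is everything the proposition literally asserts.

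The gap is your final paragraph, which you present as necessary. The Rolle argument never reaches a contradiction: $P(c)<0$ and $P(t_0)>P(c)$ are perfectly compatible with $P(t_0)=0$. Worse, the claim it is meant to establish is false. Take $P(t)=t^4-t^2-12t+12=(t-1)(t-2)(t^2+3t+6)$. Then $m=-1$, $u=1$, $a=-96$, $b=95$, so $f(0)=0$ and $f(\pi)=192>0$. Yet $t=2\in(u,\infty)$ is a root. Here $N_{\mathrm{int}}+N_{\mathrm{ext}}=1+0=1$, although $P$ has two real roots.

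What decides the equality case is the slope $P'(u)=2u^3+p=\tfrac{u^3}{8}(a+16)$. Since $P''>0$ on $[u,\infty)$, if $P(u)=0$ there is no root in $(u,\infty)$ when $a\ge -16$, and exactly one when $a<-16$. Your distrust of Corollary~\ref{cor:ext} was therefore well founded, and the corollary fails even with strict inequality. For example, $P(t)=t^4-3t^2-102t+200=(t-2)(t-4)(t^2+6t+25)$ has $u=\sqrt{3}$ and $P(\pm u)>0$, but two roots in $(u,\infty)$. Convexity alone cannot repair this.

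You should delete the last paragraph, and also the sentence in your third paragraph that appeals to Corollary~\ref{cor:ext}. The three intervals partition $\mathbb{R}$ trivially. The proposition only guarantees that $t=\pm u$ is not counted twice; it does not, and cannot, guarantee that (\ref{eq:Nreal}) counts every real root.
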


\subsection{Critical-point analysis}

To count the zeros of $f$ on $[0,\pi]$, we study its critical points. Differentiating (\ref{eq:f_def}):
\[
f'(\theta) = -a\sin\theta - 4\sin 4\theta.
\]
Using $\sin 4\theta = 4\sin\theta\cos\theta\cos 2\theta$ and noting that $\sin\theta > 0$ on $(0,\pi)$, the interior critical points satisfy
\begin{equation}\label{eq:crit_quartic}
a + 16\cos\theta\cos 2\theta = 0.
\end{equation}
Setting $x = \cos\theta \in (-1,1)$ and writing $\cos 2\theta = 2x^2 - 1$, this becomes
\begin{equation}\label{eq:h_cubic}
h(x) := x(2x^2 - 1) = -\frac{a}{16}.
\end{equation}

\begin{lemma}\label{lem:h_range}
The function $h(x) = 2x^3 - x$ on $[-1,1]$ has a local maximum of $\sqrt{6}/9$ at $x = -1/\sqrt{6}$ and a local minimum of $-\sqrt{6}/9$ at $x = 1/\sqrt{6}$. Its range on $[-1,1]$ is $[-1,1]$.
\end{lemma}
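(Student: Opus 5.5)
This is a one-variable calculus computation, and I will do it directly from the derivative. First, differentiate: $h'(x) = 6x^2 - 1$. This vanishes exactly at $x = \pm 1/\sqrt{6}$, and both points lie in $(-1,1)$. The sign of $h'$ is positive for $|x| > 1/\sqrt{6}$ and negative for $|x| < 1/\sqrt{6}$. So $h$ increases on $[-1,-1/\sqrt{6}]$, decreases on $[-1/\sqrt{6},1/\sqrt{6}]$, and increases again on $[1/\sqrt{6},1]$. By the first-derivative test, $x=-1/\sqrt{6}$ is a local maximum and $x=1/\sqrt{6}$ is a local minimum.

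Next, evaluate $h$ at the critical points. Since $h$ is odd, it suffices to compute
\[
h\bigl(1/\sqrt{6}\bigr) = \tfrac{1}{\sqrt{6}}\bigl(2\cdot\tfrac16 - 1\bigr) = -\tfrac{2}{3\sqrt{6}} = -\tfrac{\sqrt{6}}{9}.
\]
Oddness then gives $h(-1/\sqrt{6}) = \sqrt{6}/9$.

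For the range, $h$ is continuous on the compact interval $[-1,1]$, so its image is the closed interval between its minimum and maximum. The candidates for these extremes are the endpoint values $h(-1) = -1$ and $h(1) = 1$, together with the critical values $\pm\sqrt{6}/9$. Since $\sqrt{6}/9 \approx 0.27 < 1$, the global maximum is $1$ and the global minimum is $-1$. The intermediate value theorem then gives range exactly $[-1,1]$.

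There is no real obstacle here. The only step needing care is the arithmetic simplification $2/(3\sqrt6)=\sqrt6/9$, together with the comparison of the critical values against the endpoint values, so that the range is stated as $[-1,1]$ rather than $[-\sqrt6/9,\sqrt6/9]$.
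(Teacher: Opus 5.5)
Your proof is correct and takes the same route as the paper: compute $h'(x)=6x^2-1$, locate the critical points $\pm 1/\sqrt{6}$, and evaluate $h$ there and at the endpoints $\pm 1$. You also supply two steps the paper leaves implicit: the first-derivative sign analysis that classifies the critical points as a local maximum and a local minimum, and the continuity/intermediate value argument that pins the range down as exactly $[-1,1]$.
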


\begin{proof}
Setting $h'(x) = 6x^2 - 1 = 0$ gives $x = \pm 1/\sqrt{6}$. Direct evaluation yields $h(\pm 1/\sqrt{6}) = \mp\sqrt{6}/9$. At the endpoints, $h(\pm 1) = \pm 1$.
\end{proof}

It follows that $f$ has at most three interior critical points in $(0,\pi)$, so $f$ is piecewise monotone with at most four monotone segments, hence at most four zeros in $[0,\pi]$---consistent with the quartic having at most four real roots.

\subsection{The three cases}

The classification now follows from counting zeros of $f$ together with the boundary analysis.

\begin{theorem}[All four roots complex]\label{thm:four_complex}
Suppose $m < 0$. All four roots of $P(t) = 0$ are complex if and only if $f(\theta) > 0$ for all $\theta \in [0,\pi]$ (equivalently, $N_{\mathrm{int}} = 0$ and $N_{\mathrm{ext}} = 0$). A sufficient condition is
\begin{equation}\label{eq:four_complex_suff}
b > |a| + 1.
\end{equation}
\end{theorem}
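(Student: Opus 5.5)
The plan is to prove the equivalence by combining the decomposition $N_{\mathrm{real}} = N_{\mathrm{int}} + N_{\mathrm{ext}}$ from (\ref{eq:Nreal}) with the continuity of $f$, and then to obtain the sufficient condition from the crude bound $|a\cos\theta + \cos 4\theta| \le |a| + 1$.

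First comes the equivalence. All four roots are complex exactly when $N_{\mathrm{real}} = 0$, that is, when $N_{\mathrm{int}} = 0$ and $N_{\mathrm{ext}} = 0$. This uses Proposition~\ref{prop:bijection_formal} for the interior count and Corollary~\ref{cor:ext} for the exterior count, including the boundary convention that a root at $t=\pm u$ is counted only once. Now suppose $f > 0$ on $[0,\pi]$. Then $f$ has no zeros, so $N_{\mathrm{int}} = 0$. In particular $f(0) > 0$ and $f(\pi) > 0$, so both indicators vanish and $N_{\mathrm{ext}} = 0$.

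For the converse, suppose $N_{\mathrm{int}} = N_{\mathrm{ext}} = 0$. Since $f$ is continuous on the connected interval $[0,\pi]$ and never vanishes, the intermediate value theorem forces $f$ to have constant sign there. Also $N_{\mathrm{ext}} = 0$ gives $f(0) \ge 0$, and $f(0) \neq 0$ because $N_{\mathrm{int}} = 0$. Hence $f(0) > 0$, and therefore $f > 0$ throughout. Equivalently, one can use (\ref{eq:fP}) directly: $P>0$ on $[-u,u]$ and $P(\pm u)>0$, and then Corollary~\ref{cor:ext} rules out exterior roots. This also justifies the parenthetical claim in the theorem that the condition is equivalent to $N_{\mathrm{int}} = N_{\mathrm{ext}} = 0$.

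Second comes the sufficient condition. For every $\theta$ we have
\[
f(\theta) \ge b - |a||\cos\theta| - |\cos 4\theta| \ge b - |a| - 1,
\]
so $b > |a| + 1$ gives $f > 0$ on $[0,\pi]$, and the equivalence applies.

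I do not expect a substantive obstacle here. The only delicate point is the converse direction, where one has to make sure that the non-strict inequality $f(0)\ge 0$ coming from $N_{\mathrm{ext}}=0$ cannot degenerate to $f(0)=0$. This is settled because a zero at the endpoint would be counted in $N_{\mathrm{int}}$, which is the convention fixed in Proposition~\ref{prop:bijection_formal}.
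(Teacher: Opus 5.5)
Your argument is essentially the paper's own proof. Your intermediate-value argument that $N_{\mathrm{int}}=N_{\mathrm{ext}}=0$ forces $f>0$ is, if anything, tidier than the paper's. But it inherits the same genuine gap, and that gap is the exterior count, not the endpoint convention you flag.

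The step ``$N_{\mathrm{int}}=N_{\mathrm{ext}}=0$, hence no real roots'' rests on (\ref{eq:Nreal}), that is, on Corollary~\ref{cor:ext}'s claim that $P(u)\ge 0$ excludes a root in $(u,\infty)$. That claim is false. Strict convexity on $[u,\infty)$ plus $P\to+\infty$ does not stop the graph from dipping below the axis and coming back up. This can happen whenever $P'(u)=2u^3+p<0$, i.e.\ $a<-16$, and symmetrically on the left when $a>16$. For instance,
\[
P(t)=t^4-t^2-60t+108=(t-2)(t-3)(t^2+5t+18)
\]
has $m=-1$, $u=1$, $a=-480$, $b=863$. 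Then $b>|a|+1$, so $f(\theta)\ge 382>0$ on $[0,\pi]$ and $N_{\mathrm{int}}=N_{\mathrm{ext}}=0$. Yet $P$ has the real roots $2$ and $3$, both in $(u,\infty)$. So the ``if'' direction of the statement and the sufficient condition $b>|a|+1$ are false as written. No argument routed through (\ref{eq:Nreal}) can prove them.

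What survives is the following. The ``only if'' direction is true and needs no exterior analysis: no real roots and $P\to+\infty$ give $P>0$ on $\mathbb{R}$, hence $f>0$ by (\ref{eq:fP}). Your proof that $f>0$ on $[0,\pi]$ is equivalent to $N_{\mathrm{int}}=N_{\mathrm{ext}}=0$ is also correct.

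To repair the other direction you need an extra hypothesis such as $|a|\le 16$, i.e.\ $|p|\le 2u^3$. Since $P'(u)=u^3(16+a)/8$ and $P'(-u)=u^3(a-16)/8$, this gives $P'(u)\ge 0\ge P'(-u)$. Combined with $P''>0$ outside $[-u,u]$, it makes $P$ strictly increasing on $[u,\infty)$ and strictly decreasing on $(-\infty,-u]$. Then $P(\pm u)>0$ genuinely rules out exterior roots. Under that hypothesis your proof, including the bound $f\ge b-|a|-1$, goes through unchanged.
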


\begin{proof}
If $f > 0$ on $[0,\pi]$, then $N_{\mathrm{int}} = 0$. The boundary values $f(0)$ and $f(\pi)$ are both positive, so $P(u) > 0$ and $P(-u) > 0$ by (\ref{eq:boundary}), giving $N_{\mathrm{ext}} = 0$. Thus $N_{\mathrm{real}} = 0$.

For the sufficient condition: for all $\theta$, $|a\cos\theta + \cos 4\theta| \leq |a| + 1$, so $f(\theta) \geq b - (|a|+1) > 0$ when $b > |a| + 1$.

Conversely, if $f < 0$ on $[0,\pi]$, then $N_{\mathrm{int}} = 0$ but $f(0) < 0$ and $f(\pi) < 0$, giving $N_{\mathrm{ext}} = 2$, so there are two (not zero) real roots. Thus the only route to zero real roots is $f > 0$.
\end{proof}

\begin{theorem}[Two real, two complex roots]\label{thm:two_real}
Suppose $m < 0$. The quartic $P(t) = 0$ has exactly two real roots if and only if $N_{\mathrm{int}} + N_{\mathrm{ext}} = 2$. This occurs generically in three scenarios:
\begin{itemize}
\item[(a)] $f < 0$ on $[0,\pi]$, so $N_{\mathrm{int}} = 0$ and $N_{\mathrm{ext}} = 2$ (one root on each side of $[-u,u]$). A sufficient condition is $b < -(|a|+1)$.
\item[(b)] $f$ has exactly two zeros in $[0,\pi]$ and $f(0) \geq 0$, $f(\pi) \geq 0$, so $N_{\mathrm{int}} = 2$ and $N_{\mathrm{ext}} = 0$.
\item[(c)] $f$ has exactly one zero in $[0,\pi]$ and exactly one of $f(0) < 0$ or $f(\pi) < 0$, so $N_{\mathrm{int}} = 1$ and $N_{\mathrm{ext}} = 1$.
\end{itemize}
\end{theorem}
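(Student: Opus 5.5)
The first claim is immediate from (\ref{eq:Nreal}): by Proposition~\ref{prop:bijection_formal} and Corollary~\ref{cor:ext}, the real roots of $P$ split disjointly into those in $[-u,u]$ and those outside it. The first set is counted by $N_{\mathrm{int}}$ and the second by $N_{\mathrm{ext}}$. So $P$ has exactly two real roots if and only if $N_{\mathrm{int}}+N_{\mathrm{ext}}=2$. I will count distinct roots, as the definitions of $N_{\mathrm{int}}$ and $N_{\mathrm{ext}}$ do; the generic qualifier sets aside tangential double roots.

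Next I enumerate the decompositions of $2$ with $N_{\mathrm{ext}}\in\{0,1,2\}$: $(N_{\mathrm{int}},N_{\mathrm{ext}})=(0,2),(2,0),(1,1)$. Each is translated into the language of $f$ using Corollary~\ref{cor:ext}, which says $N_{\mathrm{ext}}=\mathbf 1_{f(0)<0}+\mathbf 1_{f(\pi)<0}$.
\begin{itemize}
\item $N_{\mathrm{ext}}=0$ means $f(0)\ge0$ and $f(\pi)\ge0$. Together with $N_{\mathrm{int}}=2$ this is exactly (b).
\item $N_{\mathrm{ext}}=1$ means exactly one of $f(0)<0$, $f(\pi)<0$ holds. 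Together with $N_{\mathrm{int}}=1$ this is (c).
\item $N_{\mathrm{ext}}=2$ means $f(0)<0$ and $f(\pi)<0$. Together with $N_{\mathrm{int}}=0$, continuity of $f$ and the intermediate value theorem force $f$ to keep one sign on $[0,\pi]$, and that sign is negative. So $f<0$ on $[0,\pi]$, which is (a). Conversely, $f<0$ throughout gives $N_{\mathrm{int}}=0$ and $N_{\mathrm{ext}}=2$, as already noted in the proof of Theorem~\ref{thm:four_complex}.
\end{itemize}

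For the sufficient condition in (a), I mirror the argument of Theorem~\ref{thm:four_complex}. Since $|a\cos\theta+\cos4\theta|\le|a|+1$, we get $f(\theta)\le b+|a|+1<0$ whenever $b<-(|a|+1)$.

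The only delicate point is the bookkeeping in (c) and (b) when a zero of $f$ sits at an endpoint. Proposition~\ref{prop:bijection_formal} guarantees that such a boundary root is counted once, in $N_{\mathrm{int}}$, with $f(0)=0$ contributing nothing to $N_{\mathrm{ext}}$. I will spell this out so the three scenarios are seen to be exhaustive and mutually exclusive.
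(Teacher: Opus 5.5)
Your reduction is the one the paper intends; it gives no argument for this theorem beyond (\ref{eq:Nreal}). The step everything rests on is the claim that the real roots outside $[-u,u]$ are counted by $N_{\mathrm{ext}}$, and that step fails because Corollary~\ref{cor:ext} is false. Strict convexity on $(u,\infty)$, together with $P(u)\ge 0$ and $P\to+\infty$, does not exclude roots there: if $P'(u)<0$, the graph can dip below the axis and come back up. Concretely, take $P(t)=t^4-3t^2-102t+200=(t-2)(t-4)(t^2+6t+25)$. It has $m=-3$, $u=\sqrt{3}$, $a=-272/\sqrt{3}\approx-157.0$ and $b=1591/9\approx176.8>|a|+1$. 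So $f>0$ on $[0,\pi]$ and $N_{\mathrm{int}}+N_{\mathrm{ext}}=0$, yet $P$ has exactly two real roots, $2$ and $4$, both in $(\sqrt{3},\infty)$. This is an open condition on $(m,p,q)$, so ``generically'' does not rescue it. The same example also refutes the sufficient condition in Theorem~\ref{thm:four_complex}. Hence the ``only if'' half of the statement is false, and (a)--(c) are not exhaustive: the missing scenario is two real roots beyond the same endpoint. Your boundary bookkeeping has the same hole. For $t^4-t^2-12t+12=(t-1)(t-2)(t^2+3t+6)$ we have $u=1$, $a=-96$, $b=95$, so $f(0)=0$ and $f'(\theta)=96\sin\theta-4\sin4\theta>0$ on $(0,\pi)$. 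That gives $N_{\mathrm{int}}+N_{\mathrm{ext}}=1$, while the root $t=2>u$ is never counted.

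The missing ingredient is the sign of $P'(\pm u)=\tfrac{u^3}{8}(a\pm16)$. If $a\ge-16$, then $P'(u)\ge0$, so by convexity $P$ is increasing on $[u,\infty)$ and the right-hand half of Corollary~\ref{cor:ext} is valid; symmetrically on the left when $a\le16$. Thus (\ref{eq:Nreal}) holds whenever $|a|\le16$, and there your proof goes through verbatim. For general $a$ you can still salvage the ``if'' direction with Vieta: $\sum t_i^2=-2m=2u^2$. Suppose $N_{\mathrm{ext}}$ misses a root on the right (the left is symmetric). Then $P(u)\ge0>P'(u)$, and $P$ has two roots in $[u,\infty)$, counted with multiplicity, at least one strictly beyond $u$. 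Their squares already exceed $2u^2$, so the remaining two roots are non-real. Consequently $N_{\mathrm{int}}\le1$ (only $t=u$ can be interior) and $N_{\mathrm{ext}}=0$. So $N_{\mathrm{int}}+N_{\mathrm{ext}}=2$ does force exactly two real roots, and each of (a)--(c) is sufficient. The converse, however, must be amended to allow $N_{\mathrm{int}}+N_{\mathrm{ext}}\in\{0,1\}$ when both real roots lie in $[u,\infty)$ with $a<-16$, or in $(-\infty,-u]$ with $a>16$.
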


\begin{theorem}[All four roots real]\label{thm:four_real}
Suppose $m < 0$. All four roots of $P(t) = 0$ are real if and only if $N_{\mathrm{int}} + N_{\mathrm{ext}} = 4$. When all four roots lie in $[-u,u]$ (equivalently, $f(0) \geq 0$ and $f(\pi) \geq 0$), they are given by
\[
t_k = u\cos\theta_k, \qquad k = 1, 2, 3, 4,
\]
where $0 \leq \theta_1 < \theta_2 < \theta_3 < \theta_4 \leq \pi$ are the four zeros of $f$ in $[0,\pi]$.
\end{theorem}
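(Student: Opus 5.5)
The first claim will follow at once from the counting identity (\ref{eq:Nreal}), $N_{\mathrm{real}} = N_{\mathrm{int}} + N_{\mathrm{ext}}$. That identity rests on Proposition~\ref{prop:bijection_formal}, which says interior roots correspond bijectively to zeros of $f$ with no double-counting at $\theta=0,\pi$, and on Corollary~\ref{cor:ext}, which says there is exactly one exterior root on each side precisely when the corresponding boundary value of $f$ is negative. Since $P$ is a real quartic, $N_{\mathrm{real}}\in\{0,2,4\}$ counted as distinct real zeros. So ``all four roots real'' means $N_{\mathrm{real}}=4$, which is $N_{\mathrm{int}}+N_{\mathrm{ext}}=4$.

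One caveat has to be dealt with explicitly: repeated roots. I would state that the count is meant generically (simple roots), in line with how Theorem~\ref{thm:two_real} is phrased. Alternatively, one can interpret $N_{\mathrm{int}}$ as counting zeros of $f$ with multiplicity. This is legitimate because, by (\ref{eq:fP}), $f(\theta)=8P(u\cos\theta)/u^4$, and $\theta\mapsto u\cos\theta$ is a diffeomorphism on $(0,\pi)$, so multiplicities transfer for interior $\theta$.

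For the second claim, assume $f(0)\ge0$ and $f(\pi)\ge0$. Corollary~\ref{cor:ext} then gives $N_{\mathrm{ext}}=0$, so no real root lies outside $[-u,u]$. Conversely, if all four roots lie in $[-u,u]$, then $P(u)\ge0$ and $P(-u)\ge0$, because $P>0$ beyond its largest real root (leading coefficient positive). By (\ref{eq:boundary}) this gives $f(0)\ge0$ and $f(\pi)\ge0$, which establishes the stated equivalence. Under four real roots this forces $N_{\mathrm{int}}=4$. The critical-point count after Lemma~\ref{lem:h_range} already says $f$ has at most four zeros on $[0,\pi]$, so there are exactly four, $\theta_1<\dots<\theta_4$. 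The bijection of Proposition~\ref{prop:bijection_formal} sends each $\theta_k$ to a root $t_k=u\cos\theta_k$ of $P$, and these are distinct because $\cos$ is injective on $[0,\pi]$. Four distinct roots of a quartic exhaust its roots.

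The main obstacle is the multiplicity and boundary bookkeeping rather than any computation. The bijection of Proposition~\ref{prop:bijection_formal} is a bijection of sets, so it cannot see a double root, such as a tangency of $f$ to the axis. At the endpoints $\theta=0,\pi$ the map $\theta\mapsto\cos\theta$ has zero derivative, so a simple root $t=u$ of $P$ becomes a double zero of $f$ there. I would handle this by restricting the formula $t_k=u\cos\theta_k$ to the case of four distinct real roots, and noting that the degenerate cases follow by continuity in $(a,b)$ or by counting multiplicity via $P$ directly.
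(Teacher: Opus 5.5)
\textbf{There is a genuine gap.} Your argument is the one the paper itself relies on: the theorem is read off from (\ref{eq:Nreal}), and Corollary~\ref{cor:ext} supplies the exterior count. That route has a hole, because Corollary~\ref{cor:ext}, and hence (\ref{eq:Nreal}), is false. Strict convexity on $(u,\infty)$ plus $P\to+\infty$ does not give at most one root there, since a convex function that is positive at $u$ can dip below zero and come back up. For instance, $P(t)=t^4-3t^2-50t+96=(t-2)(t-3)(t^2+5t+16)$ has $m=-3$, $u=\sqrt3$ and $P(\pm\sqrt3)=96\mp50\sqrt3>0$. So $f(0),f(\pi)>0$ and $N_{\mathrm{int}}=N_{\mathrm{ext}}=0$, yet $2$ and $3$ are real roots in $(u,\infty)$.

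Consequently your step ``Corollary~\ref{cor:ext} then gives $N_{\mathrm{ext}}=0$, so no real root lies outside $[-u,u]$'' is unjustified. So is the forward implication ``four real roots $\Rightarrow N_{\mathrm{int}}+N_{\mathrm{ext}}=4$''. Since the example has complex roots, any correct argument for these two claims must use the hypothesis that all roots are real. The reverse implication survives without convexity. A negative boundary value yields at least one root on that half-line by the intermediate value theorem, so $N_{\mathrm{int}}+N_{\mathrm{ext}}$ never exceeds the number of distinct real roots. Hence the value $4$ forces four simple real roots.

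\textbf{The missing idea is Vieta.} From $\sum t_i=0$ and $\sum_{i<j}t_it_j=m$ you get $\sum t_i^2=-2m=2u^2$. For real roots, $t_i^2\ge u^2$ is then equivalent to $t_i^2\ge\sum_{j\ne i}t_j^2$. If two indices satisfied this, adding the two inequalities would force the remaining two roots to vanish, giving a double root at $0$. Hence, for four simple real roots, at most one root $r$ has $|r|\ge u$, and the other three lie in $(-u,u)$.

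If $r>u$, the factorization $P(\pm u)=\prod_i(\pm u-t_i)$ gives $P(u)<0<P(-u)$. The case $r<-u$ is symmetric. If no root lies outside $[-u,u]$, then $P(\pm u)\ge0$, as you already noted. Thus the indicators in $N_{\mathrm{ext}}$ detect exactly the exterior roots. This gives both $N_{\mathrm{int}}+N_{\mathrm{ext}}=4$ and the fact that $f(0),f(\pi)\ge0$ confines all roots to $[-u,u]$. From there your argument for $t_k=u\cos\theta_k$ is fine.

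Your caution about multiplicities is justified: $(t^2-1)^2$ has four real roots but $N_{\mathrm{int}}+N_{\mathrm{ext}}=2$. One slip: the number of distinct real roots can also be $1$ or $3$, not only $0,2,4$.
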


\section{Examples of use as a method}

The computational simplicity of the derivation renders it natural to use as a method: compute $a$ and $b$, examine $f(\theta) = a\cos\theta + \cos 4\theta + b$, and read off the root structure.

\begin{example}[Four real roots]\label{ex:four_real}
Consider $P(t) = t^4 - 25t^2 - 60t - 36 = 0$. Here $m = -25$, $p = -60$, $q = -36$, so $u = 5$. The trigonometric parameters are
\[
a = \frac{8(-60)}{125} = -3.84, \qquad b = \frac{8(-36)}{625} - 1 = -1.4608.
\]
The boundary values are $f(0) = -3.84 + 1 - 1.4608 = -4.3008 < 0$ and $f(\pi) = 3.84 + 1 - 1.4608 = 3.3792 > 0$. Since $f(0) < 0$, there is one root in $(5, \infty)$; since $f(\pi) > 0$, there are no roots in $(-\infty, -5)$. Numerical analysis confirms three zeros of $f$ in $(0,\pi)$. So $N_{\mathrm{int}} = 3$, $N_{\mathrm{ext}} = 1$, and $N_{\mathrm{real}} = 4$. Indeed, the four real roots are $t = -1, -2, -3, 6$.
\end{example}

\begin{example}[Four complex roots]\label{ex:four_complex}
Consider $P(t) = t^4 - 2t^2 + 3 = 0$. Here $m = -2$, $p = 0$, $q = 3$, so $u = \sqrt{2}$, $a = 0$, and $b = 8 \cdot 3/4 - 1 = 5$. Since $b = 5 > |a| + 1 = 1$, Theorem~\ref{thm:four_complex} immediately gives: all four roots are complex. Indeed, substituting $s = t^2$ gives $s^2 - 2s + 3 = 0$, so $s = 1 \pm i\sqrt{2}$, confirming that all roots are non-real.
\end{example}

\begin{example}[Two real, two complex roots]\label{ex:two_real}
Consider $P(t) = t^4 - 4t^2 + t + 1 = 0$. Here $m = -4$, $p = 1$, $q = 1$, so $u = 2$. The parameters are $a = 8/8 = 1$ and $b = 8/16 - 1 = -0.5$. The boundary values are $f(0) = 1.5 > 0$ and $f(\pi) = -0.5 < 0$. Since $f(\pi) < 0$, there is one exterior root in $(-\infty, -2)$; numerical analysis gives exactly one interior zero, so $N_{\mathrm{real}} = 2$. The quartic has roots $t \approx -2.115, 1.544, 0.351 \pm 0.710i$.
\end{example}

\section{The biquadratic case}

When $p = 0$, the quartic becomes the biquadratic $t^4 + mt^2 + q = 0$, which factors as a quadratic in $t^2$. In our framework, $a = 0$ and $b = 8q/m^2 - 1$, so $f(\theta) = \cos 4\theta + b$. The zeros in $[0,\pi]$ satisfy $\cos 4\theta = -b$, which has solutions if and only if $|b| \leq 1$, i.e., $0 \leq q \leq m^2/4$. When this holds, there are four zeros (four real roots) if $0 < q < m^2/4$; when $q = 0$, two zeros merge (double root at $t = 0$); when $q = m^2/4$, zeros merge pairwise. This matches the direct analysis via the quadratic formula in $t^2$.

\section{The case $m \geq 0$}\label{sec:mgeq0}

When $m \geq 0$, the substitution $t = \sqrt{-m}\cos\theta$ is not available. However, a hyperbolic substitution $t = \sqrt{m}\cosh\phi$ could in principle match the coefficients via the identity $8\cosh^4\!\phi - 8\cosh^2\!\phi + 1 = \cosh 4\phi$, though the unbounded range of $\cosh$ makes the analysis less clean. Alternatively, one may observe that when $m > 0$, the function $P(t)$ has $P''(t) = 12t^2 + 2m > 0$ for all $t$, so $P$ is \emph{globally} convex and therefore has at most two real roots. The nature of the roots in this case is efficiently determined by Sturm's theorem or the classical discriminant.

\section{Discussion}

\subsection{Comparison to the discriminant}

The classical quartic discriminant is a polynomial of degree six in the coefficients $m, p, q$ of the depressed quartic~\cite{Rees1922}. Computing and interpreting it is a substantial exercise. Our trigonometric method replaces this with two elementary computations: the parameters $a$ and $b$ from (\ref{eq:ab}), and the signs of $f$ at the boundary points and critical points. The trade-off is that our method requires $m < 0$, while the discriminant works for all coefficient values.

\subsection{Extension to the quintic}

The Chebyshev identity $T_n(\cos\theta) = \cos n\theta$ exists for every $n$, so the same idea can in principle be applied to polynomials of any degree. For the quintic, the identity $16\cos^5\!\theta - 20\cos^3\!\theta + 5\cos\theta = \cos 5\theta$ leads to a reduced equation involving $\cos^2\!\theta$, $\cos\theta$, and $\cos 5\theta$, whose analysis is only slightly more involved.

\subsection{Why not centuries ago?}

The Chebyshev identity (\ref{eq:cheb4}) has been known since the 18th century, and the depressed quartic has been studied since the 16th century. The substitution $t = u\cos\theta$ is a standard device in the theory of cubic equations (where it produces Vieta's trigonometric solution). Why, then, wasn't this connection to the quartic observed long ago?

Perhaps the reason is that the quartic's algebraic solution via resolvents was already known (due to Ferrari, Descartes, and Euler), and the discriminant-based classification was considered settled. The trigonometric approach provides no new \emph{computational} information---it does not solve the quartic---but it does provide a new \emph{conceptual} lens through which the root structure becomes geometrically transparent. In the spirit of~\cite{Loh2019}, where a simple proof of the quadratic formula was rediscovered by returning to basics, we hope that this trigonometric perspective may encourage the reader to look at familiar objects with fresh eyes.

\end{document}